\numberwithin{equation}{section}
\newtheorem{theorem}{Theorem}[section]
\newtheorem{lemma}{Lemma}[section]
\begin{document}

\title{Global monotone convergence of Newton-like iteration for a nonlinear eigen-problem
\thanks{Research supported in part by National Natural Science Foundation of China (Grant No. 12001504 and Grant No. 41874165) and the Fundamental Research Funds for the Central Universities (Grant Number 2652019320)} }

\author{Pei-Chang Guo \thanks{ e-mail: guopeichang@pku.edu.cn;peichang@cugb.edu.cn}  \quad \\
School of Science, China University of Geosciences, Beijing, 100083, China\\
}

\date{}

\maketitle
\begin{abstract}
The nonlinear eigen-problem $ Ax+F(x)=\lambda x$ is studied where $A$ is an $n\times n$ irreducible  Stieltjes matrix.
Under certain conditions, this problem has a unique positive solution. We show that, starting from a multiple of the positive eigenvector of $A$, the Newton-like iteration
for this problem converges monotonically.
Numerical results illustrate the effectiveness of this Newton-like method.

\vspace{2mm} \noindent \textbf{Keywords}: eigen-problem, Stieltjes matrix, Newton-like method, monotone convergence
\end{abstract}

\section{Introduction}
We introduce some necessary notation for the paper. For any matrices $B=[b_{ij}]\in \mathbb{R}^{n\times n}$,
we write $B\geq 0 (B>0)$ if $b_{ij} \geq 0 (b_{ij}> 0)$ holds for all $i,j$.
For any matrices $A,B \in \mathbb{R}^{n\times n}$, we write $A\geq B (A > B)$ if $a_{ij}\geq b_{ij}(a_{ij} > b_{ij})$ for all $i, j$. For any vectors $x,y \in \mathbb{R}^n$ ,
we write $x\geq y (x>y)$ if $x_i\geq y_i (x_i>y_i)$ holds for all $i=1, \cdots,n$. The vector of all ones is denoted by $e$, i.e., $e=(1, 1, \cdots, 1)^T$.
The notation $\odot$ denotes the Hadamard product and $x^{[2]}$ denotes $x\odot x$.

In this paper, we studied the Newton-like  iteration for the following nonlinear eigenvalue problem:
\begin{equation}\label{eq0}
    Ax+F(x)=\lambda x,
\end{equation}
where
\begin{equation*}
    F(x)=\left(
           \begin{array}{c}
             f_1(x_1) \\
              f_2(x_2) \\
             \vdots \\
             f_n(x_n) \\
           \end{array}
         \right)
\end{equation*}
and $A$ is an $n\times n$ irreducible Stieltjes matrix, which is an irreducible symmetric positive definite matrix with all off-diagonal entries being nonpositive.
This problem \eqref{eq0} arises in the discretization of nonlinear differential equations. In the applications, this condition is satisfied: if $x=(x_1,x_2,\cdots, x_n)^{T}>0$ then
 $F(x)>0$ and
 $f^{''}(x)>0$ hold, where
 \begin{equation*}
    f^{''}(x)=\left(
           \begin{array}{c}
             f_1^{''}(x_1) \\
             f_2^{''}(x_2) \\
             \vdots \\
             f_n^{''}(x_n) \\
           \end{array}
         \right)
\end{equation*}

One example of \eqref{eq0} is the discretized  Gross-Pitaevskii  equation.
The Gross-Pitaevskii  equation is important when analyzing the Bose-Einstein condensation of atoms at near absolute zero temperatures \cite{bose1999}.
The continuous  Gross-Pitaevskii  equation in three spatial variables has the form
\begin{eqnarray*}
  \nonumber &-\triangle u +v(r,s,t)+ku^3=\lambda u,\quad k>0\\
  \nonumber &\lim_{|(r,s,t)|\rightarrow\infty} u=0,\quad \int_{-\infty}^{\infty}\int_{-\infty}^{\infty}\int_{-\infty}^{\infty}u(r,s,t)^2 dr ds dt=1
\end{eqnarray*}
where $v$ is a given potential function.

It's shown in \cite{choi2001} that under certain constraints on $F(x)$, \eqref{eq0} has a positive solution, $x(\lambda)$, if and only if $\lambda>\mu$, where $\mu$ is the smallest eigenvalue of $A$.
And $x(\lambda)$ is unique and monotonically increasing as $\lambda$ increases. In \cite{choi2002}, the Newton method is developed for the eigen-problem \eqref{eq0}.
At every Newton iteration step, we need to compute the Fr\'{e}chet derivative and solve a linear system. See more about the Newton method for other matrix equations in \cite{guo2013,jia2015,kelley,lin2008,liu2020}.
In this paper, motivated by the results in \cite{guo2013,jia2015,lin2008,liu2020}, we will present the Newton-like method for \eqref{eq0} and prove the global monotone convergence theory.

The rest of the paper is organized as follows. In section 2 we recall Newton's method and present a modified Newton iterative procedure. In section 3 we prove the monotone convergence
 for the modified Newton method. In section 4 we present some
numerical results, which show that our new algorithm can be faster
than the Newton method. In section 5, we give our conclusions.

\section{Newton-like  methods}
We write the eigenvalue problem as
\begin{equation}\label{eigeq}
    \mathcal{G}(x)=Ax+F(x)-\lambda x=0
\end{equation}
The function $\mathcal{G}$ defined in \eqref{eigeq} is a mapping from $\mathbb{R}^n$ into
itself and the Fr\'{e}chet derivative of $\mathcal{G}$ at $x$ is a linear map $\mathcal{G}^{'}_x: \mathbb{R}^n\rightarrow\mathbb{R}^n$ given by
\begin{equation}\label{daoshu}
   \mathcal{G}^{'}_x: z\mapsto Az+f^{'}(x)\odot z-\lambda z=(A+D^{(x)}-\lambda I)z,
\end{equation}
where
 \begin{equation*}
    f^{'}(x)=\left(
           \begin{array}{c}
             f_1^{'}(x_1) \\
             f_2^{'}(x_2) \\
             \vdots \\
             f_n^{'}(x_n) \\
           \end{array}
         \right)
\end{equation*}
and $D^{(x)}$ is a  square diagonal matrix with the elements of vector $f^{'}(x)$ on the main diagonal, i.e.,
\begin{equation*}
D^{(x)}=\left(
  \begin{array}{cccc}
    f_1^{'}(x_1) &0 & \cdots & 0 \\
    0 & f_2^{'}(x_2) &\ddots & \vdots \\
   \vdots & \ddots & \ddots & 0 \\
    0 & \cdots & 0 & f_n^{'}(x_n) \\
  \end{array}
\right).
\end{equation*}
To suppress the technical details, later we will consider $ \mathcal{G}^{'}_x$ and the matrix $ A+D^{(x)}-\lambda I$ as equal.
The second derivative of $\mathcal{G}$ at $x$, $\mathcal{G}^{''}_x: \mathbb{R}^n \times \mathbb{R}^n \rightarrow\mathbb{R}^n$, is given by
\begin{equation}\label{erdaoshu}
   \mathcal{G}^{''}_x(y,z)=f^{''}(x)\odot y\odot z.
\end{equation}
We present a  modified Newton method for \eqref{eigeq} as follows.

\textbf{ The Modified Newton Method for Equation \eqref{eq0}}\\

For a fixed $\lambda$, given $x_{0,0}$, for $k=0,1,2,\cdots$
    \begin{eqnarray}
        \label{eiglikea} x_{k,s} &=& x_{k,s-1}-(\mathcal{G}^{'}_{x_{k,0}})^{-1}\mathcal{G}(x_{k,s-1}), \quad s=1, \ldots, n_k, \\
        \label{eiglikeb} x_{k+1,0} &=& x_{k,n_k}.
\end{eqnarray}
When $n_k=1$ holds for $k=0,1,2,\cdots$, the modified Newton method \eqref{eiglikea},\eqref{eiglikeb} is the standard Newton iteration.
\section{Convergence Analysis}
We first recall that a real square matrix $A$ is called a Z-matrix if
all its off-diagonal elements are nonpositive. Note that any Z-matrix A can be
written as $sI-B$ with $B \geq 0$. A Z-matrix $A$ is called an M-matrix if $s\geq \rho(B)$,
where $\rho(\cdot)$ is the spectral radius; it is a singular M-matrix if $s=\rho(B)$ and a
nonsingular M-matrix if $s>\rho(B)$.
We will make use of the following result (see \cite{varga}).
\begin{lemma}\label{lemma1}
For a Z-matrix $A$, the following are equivalent:
\begin{itemize}
  \item [$(a)$] $A$ is a nonsingular M-matrix.
  \item [$(b)$] $A^{-1}\geq 0$ .
  \item [$(c)$] $Av>0$ for some vector $v>0$.
  \item [$(d)$] All eigenvalues of $A$ have positive real parts.
\end{itemize}
\end{lemma}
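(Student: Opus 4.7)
The plan is to prove the four equivalences via the cycle (a) $\Rightarrow$ (b) $\Rightarrow$ (c) $\Rightarrow$ (a) together with (a) $\Leftrightarrow$ (d). Since $A$ is a Z-matrix, I would fix a splitting $A = sI - B$ with $B \geq 0$ (for example $s = \max_i a_{ii}$) and use it throughout, observing that condition (a) is exactly the inequality $s > \rho(B)$.

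For (a) $\Rightarrow$ (b), I would use the standard Neumann series argument: since $\rho(B/s) < 1$, the series $\sum_{k \geq 0}(B/s)^k$ converges to $(I - B/s)^{-1}$ and is termwise nonnegative, so $A^{-1} = s^{-1}(I - B/s)^{-1} \geq 0$. For (b) $\Rightarrow$ (c), I would take $v = A^{-1}e$ with $e = (1,\dots,1)^T$; then $Av = e > 0$ and $v \geq 0$. If some $v_i$ vanished, the $i$-th row of $A^{-1}$ would be a nonnegative vector whose inner product with the strictly positive vector $e$ equals zero, forcing that row to vanish and contradicting invertibility of $A^{-1}$; hence $v > 0$.

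The step requiring the most care is (c) $\Rightarrow$ (a). I would rewrite $Av > 0$ as the strict componentwise inequality $Bv < sv$, and then invoke Perron--Frobenius for $B \geq 0$ to obtain a nonzero nonnegative left eigenvector $u$ with $u^T B = \rho(B)\, u^T$. Pairing on the left gives $\rho(B)(u^T v) = u^T B v < s\,(u^T v)$; since $v > 0$ strictly and $u \geq 0$ is nonzero, one has $u^T v > 0$, so $\rho(B) < s$, which is condition (a). This Perron--Frobenius step is the one I expect to be the main technical ingredient.

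Finally, (a) $\Leftrightarrow$ (d) is immediate from the identity $\sigma(A) = \{s - \mu : \mu \in \sigma(B)\}$ together with the fact that $\rho(B) \in \sigma(B)$ (again Perron--Frobenius): if all eigenvalues of $A$ have positive real part then in particular $s - \rho(B) > 0$, giving (a); conversely, if $s > \rho(B)$ then for any $\mu \in \sigma(B)$ one has $\mathrm{Re}(s - \mu) \geq s - |\mu| \geq s - \rho(B) > 0$. Since this lemma is classical and the paper cites Varga, I would not reproduce the argument in the manuscript itself and simply refer to the monograph.
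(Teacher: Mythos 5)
Your argument is correct: the cycle $(a)\Rightarrow(b)\Rightarrow(c)\Rightarrow(a)$ via the Neumann series, the choice $v=A^{-1}e$, and the left Perron vector of $B$, together with $(a)\Leftrightarrow(d)$ from the spectral shift $\sigma(A)=\{s-\mu:\mu\in\sigma(B)\}$, is the standard proof and each step goes through (the only point worth a remark is that the condition $s>\rho(B)$ is independent of the chosen splitting $A=sI-B$, which justifies fixing $s=\max_i a_{ii}$). The paper itself offers no proof and simply cites Varga's monograph, which is also the course you ultimately recommend, so your treatment is consistent with the paper's.
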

The following statement can be found in \cite{varga}.
\begin{lemma}\label{lemma01}
 If $A$ is a Stieltjes matrix, then it's also an M-matrix. Moreover, A is irreducible if and only if $A^{-1}>0$.
 \end{lemma}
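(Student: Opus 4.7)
The plan is to separate the lemma into two parts and attack each via Lemma \ref{lemma1} together with standard facts from Perron--Frobenius theory. For the first assertion, a Stieltjes matrix has nonpositive off-diagonal entries, hence is a Z-matrix; being symmetric positive definite, its eigenvalues are real and strictly positive, so condition $(d)$ of Lemma \ref{lemma1} immediately identifies it as a nonsingular M-matrix.

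For the equivalence between irreducibility and $A^{-1}>0$, I would treat the two directions independently. In the forward direction, write $A=sI-B$ with $B\geq 0$ and $s>\rho(B)$, which is legitimate since $A$ is a nonsingular M-matrix by the first part. Because the off-diagonal pattern of $B$ coincides with that of $-A$, irreducibility of $A$ transfers to $B$. The Neumann series then yields $A^{-1}=s^{-1}\sum_{k=0}^{\infty}(B/s)^k$, which converges entrywise because $\rho(B/s)<1$. The crucial step is to argue that the partial sum $\sum_{k=0}^{n-1} B^k$ is already strictly positive entrywise: for any $i\ne j$, irreducibility of $B$ guarantees an $i$-to-$j$ directed walk in the graph of $B$ of some length $k$ with $1\leq k\leq n-1$, making $(B^k)_{ij}>0$, while the diagonal is handled by $B^0=I$. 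Since every remaining term in the series is nonnegative, this forces $A^{-1}>0$.

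For the reverse implication I would argue by contrapositive. If $A$ is reducible, there exists a permutation matrix $P$ such that $P^{T}AP$ is block upper triangular with at least two diagonal blocks; the inverse of a nonsingular block upper triangular matrix has the same block-triangular zero pattern, so $P^{T}A^{-1}P$ carries a zero block in the strictly lower part, contradicting $A^{-1}>0$.

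The main obstacle is really the path-counting argument showing that $\sum_{k=0}^{n-1}B^{k}>0$; although this is classical Perron--Frobenius material, it demands a careful appeal to the directed graph of an irreducible nonnegative matrix. Everything else reduces to a direct invocation of Lemma \ref{lemma1} and elementary block matrix manipulations.
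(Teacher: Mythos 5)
The paper does not prove this lemma at all; it is quoted as a known result from Varga \cite{varga}. Your argument is correct and supplies the standard textbook proof: the first claim follows from condition $(d)$ of Lemma \ref{lemma1} since a symmetric positive definite Z-matrix has positive spectrum; the forward direction of the equivalence follows from the Neumann series $A^{-1}=\sum_{k=0}^{\infty}s^{-(k+1)}B^{k}$ together with the path-counting fact that $\sum_{k=0}^{n-1}B^{k}>0$ for an irreducible nonnegative $B$ (irreducibility transferring from $A$ to $B$ because it depends only on the off-diagonal zero pattern); and the converse is correctly handled by the block-triangular contrapositive. I see no gaps.
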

The next result is also well known and also can be found in \cite{varga}.
\begin{lemma}\label{lemma2}
Let $A$ be a nonsingular M-matrix. If $B \geq A$ is a Z-matrix, then $B$ is also  nonsingular M-matrix. Moreover, $B^{-1}\leq A^{-1}$.
\end{lemma}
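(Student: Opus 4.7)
The plan is to obtain both conclusions by leveraging Lemma \ref{lemma1}, specifically the equivalence of being a nonsingular M-matrix with the positivity condition $(c)$ and with the nonnegativity of the inverse in $(b)$. The whole argument should be short: the first claim is a direct transfer of a positive test vector from $A$ to $B$, and the inequality on inverses is a one-line consequence of a standard resolvent-type identity.

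First I would invoke Lemma \ref{lemma1}$(c)$ for $A$: since $A$ is a nonsingular M-matrix, there is some $v>0$ with $Av>0$. Because $B-A\geq 0$ entrywise and $v>0$, the product $(B-A)v$ is entrywise nonnegative, so
\begin{equation*}
Bv = Av + (B-A)v \geq Av > 0.
\end{equation*}
Since $B$ is assumed to be a Z-matrix and satisfies $Bv>0$ for a positive $v$, the reverse direction of Lemma \ref{lemma1} (namely $(c)\Rightarrow(a)$) shows that $B$ is itself a nonsingular M-matrix. In particular, $B^{-1}$ exists, and by Lemma \ref{lemma1}$(b)$ we have $B^{-1}\geq 0$ and $A^{-1}\geq 0$.

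For the inequality $B^{-1}\leq A^{-1}$, I would use the identity
\begin{equation*}
A^{-1} - B^{-1} \;=\; A^{-1}(B-A)B^{-1},
\end{equation*}
valid because both inverses exist. The right-hand side is the product of three entrywise nonnegative matrices ($A^{-1}\geq 0$, $B-A\geq 0$, $B^{-1}\geq 0$), so it is nonnegative, which gives exactly $B^{-1}\leq A^{-1}$.

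I do not anticipate a real obstacle in this proof: the argument is essentially two applications of Lemma \ref{lemma1} bracketing the resolvent identity. The only point that merits a moment of care is confirming that $Bv>0$ (strict) rather than merely $\geq 0$, but this is guaranteed by $Av>0$ alone, independent of the sign of $(B-A)v$. Everything else is direct and does not require any structural assumption on $A$ or $B$ beyond what Lemma \ref{lemma1} already supplies.
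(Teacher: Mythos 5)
Your proof is correct: the transfer of the positive test vector $v$ via Lemma \ref{lemma1}$(c)$ establishes that $B$ is a nonsingular M-matrix, and the identity $A^{-1}-B^{-1}=A^{-1}(B-A)B^{-1}$ together with Lemma \ref{lemma1}$(b)$ gives the inverse inequality. The paper itself offers no proof of this lemma (it simply cites Varga), and your argument is the standard one found there, so there is nothing further to compare.
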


The following results are given in \cite{choi2001,choi2002}.
\begin{lemma}\label{choi1}
Let $A$ be an irreducible Stieltjes matrix and let $p$ be the positive
eigenvector of $A$ corresponding to $\mu$, the smallest eigenvalue of $A$.  Let
\begin{equation*}
    F(x)=\left(
           \begin{array}{c}
             f_1(x_1) \\
             \vdots \\
             f_n(x_n) \\
           \end{array}
         \right)
\end{equation*}
where for $i=1, \cdots, n$, $f_i\in C^1[0,\infty)$, $\lim_{t\rightarrow\infty}f_i(t)/t=\infty$ and $\lim_{t\rightarrow0} f_i(t)/t=0$.
Then for $\lambda > \mu$, \eqref{eq0} has a positive solution. If, in addition, for  $i=1, \cdots, n$,
\begin{equation}\label{choicon1}
    \frac{f_i(s)}{s}<\frac{f_i(t)}{t} \quad if \quad  0<s<t,
\end{equation}
then the solution is unique.
\end{lemma}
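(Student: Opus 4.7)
For existence I would use the sub- and super-solution method, while for uniqueness I would exploit the strict monotonicity \eqref{choicon1} through a scaling/comparison argument. The plan is to rewrite \eqref{eq0} as a fixed-point equation
\[
(A+sI)x = (s+\lambda)x - F(x),
\]
where $s>0$ is chosen large enough so that, on the order interval of interest, the map $\Phi(x):=(A+sI)^{-1}\bigl[(s+\lambda)x-F(x)\bigr]$ is well defined and component-wise monotone. Here $A+sI$ is a nonsingular M-matrix (Lemma \ref{lemma2}), hence $(A+sI)^{-1}\ge 0$ by Lemma \ref{lemma1}; the monotonicity of $\Phi$ reduces to requiring $s+\lambda-f_i'(t)\ge 0$ on a compact range, which is guaranteed for $s$ large because each $f_i\in C^1$.

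To produce a sub-solution, I would try $\underline{x}=\varepsilon p$ with $\varepsilon>0$ small. Using $Ap=\mu p$, the sub-solution inequality $A\underline{x}+F(\underline{x})\le\lambda\underline{x}$ becomes $f_i(\varepsilon p_i)/(\varepsilon p_i)\le \lambda-\mu$ for every $i$, which holds for sufficiently small $\varepsilon$ since $\lambda>\mu$ and $\lim_{t\to 0}f_i(t)/t=0$. For a super-solution I would try $\overline{x}=Mp$ with $M$ large; the condition collapses to $f_i(Mp_i)/(Mp_i)\ge \lambda-\mu$, which is satisfied for large $M$ by $\lim_{t\to\infty}f_i(t)/t=\infty$. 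Starting from $\underline{x}$ and iterating $x_{k+1}=\Phi(x_k)$ then yields a sequence that is nondecreasing, trapped inside $[\underline{x},\overline{x}]$, and therefore converges to a fixed point of $\Phi$, i.e.\ a positive solution of \eqref{eq0}.

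For uniqueness under \eqref{choicon1}, suppose $x,y>0$ are both solutions and set $\alpha=\max_i x_i/y_i$. Assume for contradiction $\alpha>1$ and let $i_0$ attain the maximum, so $x_{i_0}=\alpha y_{i_0}$ and $x_j\le \alpha y_j$ for all $j$. Since the off-diagonal entries of $A$ are nonpositive, the $i_0$-th row of $Ax-\alpha Ay$ is nonnegative, giving $(Ax)_{i_0}\ge \alpha (Ay)_{i_0}$. Substituting $(Ax)_{i_0}=\lambda x_{i_0}-f_{i_0}(x_{i_0})$ and $(Ay)_{i_0}=\lambda y_{i_0}-f_{i_0}(y_{i_0})$ and using $x_{i_0}=\alpha y_{i_0}$ yields $f_{i_0}(x_{i_0})\le \alpha f_{i_0}(y_{i_0})$, equivalently $f_{i_0}(x_{i_0})/x_{i_0}\le f_{i_0}(y_{i_0})/y_{i_0}$. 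Because $x_{i_0}>y_{i_0}$, this contradicts \eqref{choicon1}. Hence $\alpha\le 1$, i.e.\ $x\le y$, and by swapping the roles of $x$ and $y$ we conclude $x=y$.

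The part I expect to require the most care is the construction of the shift $s$ and the verification that the resulting iteration actually stays inside $[\underline{x},\overline{x}]$ so that monotonicity of $\Phi$ can be invoked throughout; once that is in place, the sub/super-solution inequalities reduce to the asymptotic behavior of $f_i(t)/t$ at $0$ and $\infty$. The uniqueness step is short but hinges on choosing the index that attains the maximum ratio so that \eqref{choicon1} is triggered at a point where $x_{i_0}>y_{i_0}$ strictly.
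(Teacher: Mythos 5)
The paper does not prove this lemma at all: it is imported verbatim from the references \cite{choi2001,choi2002} (note the sentence ``The following results are given in...''), so there is no in-paper argument to compare against. Your proposal is a correct, self-contained proof, and it is in the spirit of the cited source, whose title already announces a nonlinear Perron--Frobenius theorem: existence via monotone iteration between the sub-solution $\varepsilon p$ and the super-solution $Mp$ (both inequalities reduce, exactly as you say, to $f_i(t)/t\le\lambda-\mu$ near $0$ and $f_i(t)/t\ge\lambda-\mu$ near $\infty$, using $Ap=\mu p$), and uniqueness via the maximal ratio $\alpha=\max_i x_i/y_i$ combined with the sign pattern of the off-diagonal entries of $A$ and the strict sublinearity condition \eqref{choicon1}. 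Two small points to nail down when you write it out: first, the invariance of $[\underline{x},\overline{x}]$ under $\Phi$ needs the one-line checks $\Phi(\underline{x})\ge\underline{x}$ and $\Phi(\overline{x})\le\overline{x}$, which follow from $(A+sI)^{-1}\ge 0$ applied to $-\mathcal{G}(\underline{x})\ge 0$ and $-\mathcal{G}(\overline{x})\le 0$ respectively; second, the shift can be fixed once and for all as any $s\ge \max_i\sup_{0\le t\le Mp_i}f_i'(t)-\lambda$, which is finite because each $f_i'$ is continuous on the compact interval $[0,Mp_i]$, so there is no circularity in choosing $s$ before running the iteration. With those details supplied the argument is complete.
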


\begin{lemma}\label{choi2}
In conditions of Lemma \ref{choi1}, if $f_i(t), i=1, \cdots, n$ satisfy the following condition
\begin{equation}\label{choicon2}
    f^{'}_i(t)>\frac{f_i(t)}{t} \quad t>0,
\end{equation}
then $x(\lambda)$ is differentiable as a function of $\lambda$. Moreover, let $q$ be the unique positive solution of \eqref{eq0} for a given $\lambda$.
\begin{equation*}
    Aq+F(q) =\lambda q
\end{equation*}
implies that $ A+D^{(q)}-\lambda I $ is an irreducible Stieltjes matrix, where $D^{(q)}= diag( f_1^{'}(q_1), f_2^{'}(q_2),\cdots,  f_n^{'}(q_n))$.
\end{lemma}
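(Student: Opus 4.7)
The plan is to establish both conclusions by producing a concrete positive vector that the matrix $M:=A+D^{(q)}-\lambda I$ sends to a strictly positive vector, which will let Lemma~\ref{lemma1}(c) yield M-matrix status, and then to combine that with symmetry to obtain the Stieltjes property and with the implicit function theorem to get the differentiability.

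First I would observe that $M$ is manifestly a Z-matrix: $A$ has nonpositive off-diagonal entries, while $D^{(q)}-\lambda I$ is diagonal, so only the diagonal of $A$ is altered. Symmetry of $M$ is also immediate since $A=A^{T}$ and $D^{(q)}-\lambda I$ is diagonal. Irreducibility is inherited from $A$ because purely diagonal modifications do not change the zero-pattern of the off-diagonal entries, hence do not change the directed graph associated with the matrix.

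The core step is to test $M$ on the vector $q$ itself. Using the defining identity $Aq+F(q)=\lambda q$, I would compute
\begin{equation*}
Mq \;=\; Aq+D^{(q)}q-\lambda q \;=\; D^{(q)}q-F(q) \;=\; \bigl(f_i'(q_i)q_i-f_i(q_i)\bigr)_{i=1}^{n}.
\end{equation*}
Hypothesis~\eqref{choicon2} gives $f_i'(q_i)>f_i(q_i)/q_i$, and multiplying by $q_i>0$ yields $f_i'(q_i)q_i-f_i(q_i)>0$ componentwise. Hence $Mq>0$ with $q>0$, so by Lemma~\ref{lemma1}(c) the Z-matrix $M$ is a nonsingular M-matrix; being symmetric as well, it is a (nonsingular) Stieltjes matrix, and combined with the irreducibility observed above this establishes the second claim.

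Differentiability of $x(\lambda)$ then follows from the implicit function theorem applied to $\mathcal{G}(x,\lambda)=Ax+F(x)-\lambda x$ at the point $(q,\lambda)$. The Fr\'echet derivative in $x$ is exactly $M$, which we have just shown to be nonsingular, and the $C^{1}$ hypothesis on the $f_i$ supplies the required smoothness; the theorem produces a local $C^{1}$ branch of solutions, which by the uniqueness half of Lemma~\ref{choi1} must coincide with $x(\lambda)$. The one nonobvious step is the choice of test vector: absent the idea of plugging $q$ itself into $M$, the M-matrix property is not easy to access, since a priori one does not know how $\lambda$ compares to the smallest eigenvalue of $A+D^{(q)}$. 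Once the substitution is made, the hypothesis $tf_i'(t)>f_i(t)$ delivers positivity of $Mq$ by design and the rest is bookkeeping.
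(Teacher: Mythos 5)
Your proof is correct. Note that the paper itself offers no proof of this lemma; it is quoted verbatim from the cited works of Choi, Koltracht and McKenna \cite{choi2001,choi2002}, and your argument — testing $M=A+D^{(q)}-\lambda I$ on $q$ itself so that $Mq=D^{(q)}q-F(q)>0$ by \eqref{choicon2}, invoking Lemma~\ref{lemma1}(c) together with symmetry and the unchanged off-diagonal zero pattern, then applying the implicit function theorem with uniqueness from Lemma~\ref{choi1} — is precisely the standard route taken in those references. The only point worth making explicit is the final step from ``symmetric nonsingular M-matrix'' to ``Stieltjes'': by Lemma~\ref{lemma1}(d) all eigenvalues of $M$ have positive real part, and symmetry makes them real and positive, so $M$ is symmetric positive definite with nonpositive off-diagonal entries, i.e.\ Stieltjes.
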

Obviously, if  $f_i^{''}(t)>0, 0<t<\infty$, then the condition \eqref{choicon1} and \eqref{choicon2} are all satisfied.
Lemma \ref{choi2} tells that $ \mathcal{G}^{'}_q$ is an irreducible Stieltjes matrix, which will be useful in our following derivations.

We first give the following lemma, which displays the monotone convergence properties of the Newton iteration.
 \begin{lemma}\label{lemma3}
For $ i=1, \cdots, n$, assume $f_i^{''}(t)>0, 0<t<\infty$. Suppose that a vector $x$ is such that
\begin{itemize}
  \item [(i)] $\mathcal{G}(x)> 0$,
  \item [(ii)] $q<x $,
  \item [(iii)] $\mathcal{G}^{'}_{x}$ is an irreducible Stieltjes matrix.
\end{itemize}
Then the vector
\begin{equation}\label{zheng1}
    y=x-(\mathcal{G}^{'}_{x})^{-1}\mathcal{G}(x)
\end{equation}
satisfies
\begin{itemize}
  \item [(a)] $\mathcal{G}(y)> 0$,
  \item [(b)] $q<y<x$,
  \item [(c)]  $\mathcal{G}^{'}_{y}$ is an irreducible Stieltjes matrix.
\end{itemize}
\end{lemma}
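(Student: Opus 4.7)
The plan is to exploit a componentwise Taylor expansion of $\mathcal{G}$ together with the positivity of $(\mathcal{G}'_x)^{-1}$, which comes for free from hypothesis (iii) and Lemma \ref{lemma01}. Since $Ax-\lambda x$ is linear and $F$ acts entrywise, the coordinate-wise Lagrange form gives the exact identity
\begin{equation*}
\mathcal{G}(u)=\mathcal{G}(v)+\mathcal{G}'_v(u-v)+\tfrac12\,\mathcal{G}''_\xi(u-v,u-v)
\end{equation*}
for any $u,v\in\R^n$, where $\xi$ is an intermediate vector and the remainder equals $\tfrac12\,f''(\xi)\odot(u-v)^{[2]}$. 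The assumption $f_i''>0$ makes this remainder strictly positive componentwise whenever $u$ and $v$ disagree in every coordinate. Everything in the lemma will be extracted from this identity by choosing $(u,v)=(q,x)$ and $(u,v)=(y,x)$.

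First I would establish (b). From (i) and Lemma \ref{lemma01} applied to (iii), $(\mathcal{G}'_x)^{-1}>0$, so $y-x=-(\mathcal{G}'_x)^{-1}\mathcal{G}(x)<0$, giving $y<x$. Setting $(u,v)=(q,x)$ in the Taylor identity and using $\mathcal{G}(q)=0$ together with $q<x$ and $f_i''>0$, the remainder is strictly positive, hence
\begin{equation*}
\mathcal{G}'_x(q-x)=-\mathcal{G}(x)-\tfrac12 f''(\xi)\odot(q-x)^{[2]}<-\mathcal{G}(x)
\end{equation*}
componentwise. Multiplying by the entrywise positive matrix $(\mathcal{G}'_x)^{-1}$ preserves strict inequality, yielding $q-x<y-x$, i.e.\ $q<y$. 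This proves (b).

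For (a), I would set $(u,v)=(y,x)$ in the Taylor identity. The defining relation $\mathcal{G}'_x(y-x)=-\mathcal{G}(x)$ collapses the linear part, leaving
\begin{equation*}
\mathcal{G}(y)=\tfrac12\,f''(\eta)\odot(y-x)^{[2]}>0,
\end{equation*}
since $y<x$ in every coordinate and $f_i''>0$. For (c), I would compare $\mathcal{G}'_y$ with $\mathcal{G}'_q$. By Lemma \ref{choi2}, $\mathcal{G}'_q=A+D^{(q)}-\lambda I$ is already an irreducible Stieltjes matrix. Because $f_i''>0$ makes $f_i'$ strictly increasing and because $q<y$, we have $D^{(q)}<D^{(y)}$ on the diagonal, while the off-diagonal patterns of $\mathcal{G}'_y$ and $\mathcal{G}'_q$ coincide (they are those of $A$). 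Thus $\mathcal{G}'_y\geq\mathcal{G}'_q$ with both being Z-matrices, and Lemma \ref{lemma2} makes $\mathcal{G}'_y$ a nonsingular M-matrix; symmetry is inherited from $A$ and $D^{(y)}$, so it is a Stieltjes matrix, and irreducibility transfers from $A$ through the off-diagonal pattern.

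The main delicate point is propagating strict, rather than weak, componentwise inequalities when multiplying by $(\mathcal{G}'_x)^{-1}$ in the proof of $q<y$. This step is legitimate precisely because hypothesis (iii) forces every entry of $(\mathcal{G}'_x)^{-1}$ to be strictly positive, so a vector that is strictly positive in every coordinate is mapped to a vector that is again strictly positive in every coordinate. The rest of the proof is routine bookkeeping of signs guaranteed by $f_i''>0$.
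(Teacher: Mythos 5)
Your proof is correct and follows essentially the same route as the paper's: a second-order Taylor expansion of $\mathcal{G}$ about $x$ evaluated at $q$ (for $q<y$) and at $y$ (for $\mathcal{G}(y)>0$), the entrywise positivity of $(\mathcal{G}'_{x})^{-1}$, and Lemma \ref{choi2} combined with the M-matrix comparison Lemma \ref{lemma2} for part (c). The only cosmetic difference is that you use the componentwise Lagrange form of the remainder $\tfrac12 f''(\xi)\odot(u-v)^{[2]}$ where the paper uses the integral form $\int_{0}^{1}(1-\theta)f''(v+\theta(u-v))\odot(u-v)^{[2]}\,d\theta$; both yield the same strict positivity since the intermediate points lie between $q>0$ and $x$.
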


\begin{proof}
First, the vector y is well defined by $(iii)$ and $y<x$. This is the right inequality of $(b)$.
By the Taylor formula, we get
\begin{equation*}
  0=\mathcal{G}(q)= \mathcal{G}(x)+ \mathcal{G}^{'}_{x}(q-x)+\int_{0}^{1}(1-\theta)f^{''}(x+\theta(q-x))\odot(q-x)^{[2]}d\theta
\end{equation*}
where $\theta \in \mathbb{R}$.
Therefore
\begin{eqnarray*}
\nonumber - \mathcal{G}^{'}_{x}(q-y)&=& \mathcal{G}^{'}_{x}(y-x)-\mathcal{G}^{'}_{x}(q-x) \\
\nonumber  &=& -\mathcal{G}(x)-\mathcal{G}^{'}_{x}(q-x) \\
\nonumber   &=&  \int_{0}^{1}(1-\theta)f^{''}(x+\theta(q-x))\odot(q-x)^{[2]}d\theta
\end{eqnarray*}
 where we used \eqref{zheng1} in the second equality and $\mathcal{G}(q)=0$ in the third equality.
When $0\leq \theta \leq1$, since $x>q>0$ by $(ii)$, we have $ x+\theta(q-x)=(1-\theta) x+\theta q>0$ and thus $f^{''}(x+\theta(q-x))>0$ by the assumption.  And we have $q<x $ by $(ii)$, so we can get
$\int_{0}^{1}(1-\theta)f^{''}(x+\theta(q-x))\odot(q-x)^{[2]}d\theta>0$, which is $ - \mathcal{G}^{'}_{x}(q-y)>0$.
$\mathcal{G}^{'}_{x}$ is an irreducible Stieltjes matrix by $(iii)$, so we have $(\mathcal{G}^{'}_{x})^{-1}>0$ and thus $q-y<0$. Up to now, we have proved $(b)$.

Because $\mathcal{G}^{'}_{q}$ is an irreducible Stieltjes matrix from Lemma \ref{choi2},  we have  $\mathcal{G}^{'}_{y}$ is an irreducible Stieltjes matrix by $y>q$ and Lemma \ref{lemma2}. This proves $(c)$.
Now we prove $(a)$.
\begin{eqnarray*}
  \nonumber \mathcal{G}(y)&=& \mathcal{G}(x)+ \mathcal{G}^{'}_{x}(y-x)+\int_{0}^{1}(1-\theta)f^{''}(x+\theta(y-x))\odot(y-x)^{[2]}d\theta\\
  \nonumber &=& \int_{0}^{1}(1-\theta)f^{''}(x+\theta(y-x))\odot(y-x)^{[2]}d\theta
\end{eqnarray*}
Since $(b)$ $0<q<y<x$ has been proved,  when $0\leq \theta \leq1$ we have $x+\theta(y-x)>0$  and thus $f^{''}(x+\theta(y-x))>0$. Then $\mathcal{G}(y)>0$.
\end{proof}

The following lemma is a generalization of Lemma \ref{lemma3} and will be the basis of our convergence result about Newton-like iteration.
 \begin{lemma}\label{lemma4}
For $ i=1, \cdots, n$, assume $f_i^{''}(t)>0, 0<t<\infty$. Suppose that a vector $x$ is such that
\begin{itemize}
  \item [(i)] $\mathcal{G}(x)> 0$,
  \item [(ii)] $q<x $,
  \item [(iii)] $\mathcal{G}^{'}_{x}$ is an irreducible Stieltjes matrix.
\end{itemize}
Then for any vector $z$ satisfying $z\geq x$,  the vector
\begin{equation}\label{zheng2}
    y=x-(\mathcal{G}^{'}_{z})^{-1}\mathcal{G}(x)
\end{equation}
is well defined and
\begin{itemize}
  \item [(a)] $\mathcal{G}(y)> 0$,
  \item [(b)] $q<y<x$,
  \item [(c)] $\mathcal{G}^{'}_{y}$ is an irreducible Stieltjes matrix.
\end{itemize}
\end{lemma}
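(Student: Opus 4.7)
The plan is to parallel the three-step structure of the proof of Lemma \ref{lemma3}, adapting each step to absorb an extra ``correction'' term that arises from using $\mathcal{G}'_z$ in place of $\mathcal{G}'_x$. The key observation throughout is that $z\geq x>0$ together with $f_i''>0$ forces $f_i'(z_i)\geq f_i'(x_i)$, so $D^{(z)}\geq D^{(x)}$, equivalently $\mathcal{G}'_z-\mathcal{G}'_x\geq 0$ as a (nonnegative) diagonal matrix. Combined with Lemma \ref{lemma2} applied to $\mathcal{G}'_z\geq \mathcal{G}'_x$ (a Z-matrix $\geq$ a nonsingular M-matrix), this shows $\mathcal{G}'_z$ is itself a nonsingular M-matrix. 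Symmetry of $A$ plus diagonality of $D^{(z)}$ makes it symmetric, and its off-diagonal sign pattern equals that of $A$, so $\mathcal{G}'_z$ is in fact an irreducible Stieltjes matrix. Hence $(\mathcal{G}'_z)^{-1}>0$ and $y$ is well defined; together with $\mathcal{G}(x)>0$ this immediately gives the right half of (b), $y<x$.

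For the left half of (b), I would expand $\mathcal{G}(q)=0$ by Taylor at $x$ exactly as in Lemma \ref{lemma3} to obtain
\[
 \mathcal{G}(x)=\mathcal{G}'_x(x-q)-R,\qquad R=\int_0^1(1-\theta)f''(x+\theta(q-x))\odot(q-x)^{[2]}\,d\theta>0.
\]
Substituting into $\mathcal{G}'_z(y-x)=-\mathcal{G}(x)$ and adding $\mathcal{G}'_z(x-q)$ to both sides yields
\[
 \mathcal{G}'_z(y-q)=(\mathcal{G}'_z-\mathcal{G}'_x)(x-q)+R=(D^{(z)}-D^{(x)})(x-q)+R.
\]
Both terms on the right are nonnegative (with $R>0$), and $(\mathcal{G}'_z)^{-1}>0$, so $y>q$.

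Part (c) follows exactly as in Lemma \ref{lemma3}: from $y>q$ and $f''>0$ we get $\mathcal{G}'_y\geq \mathcal{G}'_q$, where $\mathcal{G}'_q$ is an irreducible Stieltjes matrix by Lemma \ref{choi2}, so Lemma \ref{lemma2} together with symmetry and the preserved sparsity pattern of $A$ gives that $\mathcal{G}'_y$ is an irreducible Stieltjes matrix.

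For (a), I would again Taylor-expand $\mathcal{G}(y)$ at $x$ and split $\mathcal{G}'_x(y-x)=\mathcal{G}'_z(y-x)+(\mathcal{G}'_x-\mathcal{G}'_z)(y-x)=-\mathcal{G}(x)+(D^{(x)}-D^{(z)})(y-x)$, which collapses the first two Taylor terms into the cross-term $(D^{(x)}-D^{(z)})(y-x)$. Since $D^{(x)}-D^{(z)}\leq 0$ (diagonal) and $y-x<0$, this cross-term is nonnegative componentwise; the quadratic Taylor remainder is strictly positive because $y<x$ strictly componentwise and $f''>0$ on the segment between $y$ and $x$, which lies in $(0,\infty)$. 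Adding these yields $\mathcal{G}(y)>0$. The main obstacle is bookkeeping the signs of the $\mathcal{G}'_z-\mathcal{G}'_x$ cross-terms so that, in both directions, they combine with the positive Taylor remainder rather than against it; once one notes that the monotonicity of $f'$ places $D^{(z)}-D^{(x)}$ on the correct side each time, the proof essentially reduces to that of Lemma \ref{lemma3}.
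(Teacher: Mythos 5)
Your proposal is correct. For well-definedness and parts (a) and (c) it follows the paper's argument essentially verbatim: $\mathcal{G}^{'}_{z}\geq \mathcal{G}^{'}_{x}$ plus Lemma \ref{lemma2} gives $(\mathcal{G}^{'}_{z})^{-1}>0$ and hence $y<x$; part (a) uses exactly the paper's decomposition $\mathcal{G}^{'}_{x}(y-x)=\mathcal{G}^{'}_{z}(y-x)+(D^{(x)}-D^{(z)})(y-x)$ so that the surviving cross-term and the Taylor remainder are both nonnegative with the remainder strictly positive. The genuine difference is in the left inequality of (b). The paper introduces the exact Newton iterate $\hat{y}=x-(\mathcal{G}^{'}_{x})^{-1}\mathcal{G}(x)$, notes $0<(\mathcal{G}^{'}_{z})^{-1}\leq(\mathcal{G}^{'}_{x})^{-1}$ so that $y\geq\hat{y}$, and then imports $\hat{y}>q$ (and the Stieltjes property of $\mathcal{G}^{'}_{\hat{y}}$, used for (c)) from Lemma \ref{lemma3}. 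You instead prove $y>q$ directly: from the Taylor identity $\mathcal{G}(x)=\mathcal{G}^{'}_{x}(x-q)-R$ with $R=\int_{0}^{1}(1-\theta)f^{''}(x+\theta(q-x))\odot(q-x)^{[2]}d\theta>0$ you get $\mathcal{G}^{'}_{z}(y-q)=(D^{(z)}-D^{(x)})(x-q)+R>0$ and conclude via $(\mathcal{G}^{'}_{z})^{-1}>0$; your sign bookkeeping here is right, since $D^{(z)}\geq D^{(x)}$ follows from $z\geq x>0$ and $f^{''}>0$. Your route makes Lemma \ref{lemma4} self-contained (it never invokes Lemma \ref{lemma3}, and derives (c) from $y>q$ and $\mathcal{G}^{'}_{q}$ via Lemma \ref{choi2} rather than from $y\geq\hat{y}$), at the cost of repeating the Taylor computation; the paper's comparison with $\hat{y}$ is shorter because it recycles Lemma \ref{lemma3}, and it additionally records the sandwich $q<\hat{y}\leq y<x$, i.e., that the modified step is never shorter than it should be but never overshoots past the Newton step. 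Both arguments are sound.
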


\begin{proof}
Because $z\geq x$, $\mathcal{G}^{'}_{z}$ is also an irreducible Stieltjes matrix from Lemma \ref{lemma2} and $(iii)$.
So the vector $y$ is well defined and $y<x$ follows from $(\mathcal{G}^{'}_{z})^{-1}>0$ and $\mathcal{G}(x)>0$.
To prove $(b)$, next we need to prove $q<y$.
Let $\hat{y}=x-(\mathcal{G}^{'}_{x})^{-1}\mathcal{G}(x)$. $0< (\mathcal{G}^{'}_{z})^{-1}\leq (\mathcal{G}^{'}_{x})^{-1}$ holds from Lemma \ref{lemma2}, so
$y \geq \hat{y}$. By Lemma \ref{lemma3}, we have $\hat{y}>q$. So we get $q<\hat{y}\leq y<x$ then $(b)$ is true.

By   Lemma \ref{lemma3}, we also know $\mathcal{G}^{'}_{\hat{y}}$ is an irreducible Stieltjes matrix. So $(c)$ is true by $y\geq \hat{y}$ and Lemma \ref{lemma2}.
Next we prove $(a)$.
\begin{eqnarray*}
  \nonumber \mathcal{G}(y)&=& \mathcal{G}(x)+ \mathcal{G}^{'}_{x}(y-x)+\int_{0}^{1}(1-\theta)f^{''}(x+\theta(y-x))\odot(y-x)^{[2]}d\theta\\
  \nonumber    &=& \mathcal{G}(x)+ \mathcal{G}^{'}_{z}(y-x)+(\mathcal{G}^{'}_{x}-\mathcal{G}^{'}_{z})(y-x)+\int_{0}^{1}(1-\theta)f^{''}(x+\theta(y-x))\odot(y-x)^{[2]}d\theta\\
  \nonumber &=& (\mathcal{G}^{'}_{x}-\mathcal{G}^{'}_{z})(y-x)+\int_{0}^{1}(1-\theta)f^{''}(x+\theta(y-x))\odot(y-x)^{[2]}d\theta\\
  \nonumber &=& (f^{'}_{x}-f^{'}_{z})\odot (y-x)+\int_{0}^{1}(1-\theta)f^{''}(x+\theta(y-x))\odot(y-x)^{[2]}d\theta\\
   \nonumber &=& (f^{''}(z+w\odot(x-z))\odot(x-z)\odot (y-x)+\int_{0}^{1}(1-\theta)f^{''}(x+\theta(y-x))\odot(y-x)^{[2]}d\theta
\end{eqnarray*}
where $0<w<e, w\in\mathbb{R}^{n}$.
Because $x-z\leq0$, $y-x<0$ and $f^{''}(z+w\odot (x-z))>0$, $f^{''}(x+\theta(y-x))>0$, we have $\mathcal{G}(y)>0$.
\end{proof}

Using Lemma \ref{lemma4}, we can get the monotone convergence result of the Newton-like method for \eqref{eq0}. For  $i=0,1,\cdots$, we will use $x_i$ to
denote $x_{i,0}$ in the Newton-like method \eqref{eiglikea}\eqref{eiglikeb}, thus $x_i=x_{i,0}= x_{i-1,n_{i-1}}$.
\begin{theorem}\label{thm1}
Let $A$ be an irreducible Stieltjes matrix and
\begin{equation*}
    F(x)=\left(
           \begin{array}{c}
             f_1(x_1) \\
             \vdots \\
             f_n(x_n) \\
           \end{array}
         \right)
\end{equation*}
where for $i=1, \cdots, n$, $f_i\in C^2[0,\infty)$, $\lim_{t\rightarrow\infty}f_i(t)/t=\infty, \lim_{t\rightarrow0} f_i(t)/t=0$ and $f_i^{''}(t)>0, 0<t<\infty$. Let $p$ be the positive
eigenvector of $A$ corresponding to $\mu$, the smallest eigenvalue of $A$. For some $\lambda>\mu$, let $q$ be the unique positive solution of
\begin{equation*}
   \mathcal{G}(x)= Ax+F(x)-\lambda x=0.
\end{equation*}
Let $x_{0,0}=\beta p>q$, where $\beta$ is large enough  satisfying that
\begin{equation}\label{con1}
    \min_{1\leq i\leq n}\frac{f_i(\beta p_i)}{\beta p_i}>\lambda-\mu.
\end{equation}
Then the Newton-like method \eqref{eiglikea},\eqref{eiglikeb} converges to $q$ monotonically,
$x_{k+1} < x_{k}$ for all $k \geq 0$, and $ \lim_{k \to \infty} \mathcal{G}(x_k)=0$.

\end{theorem}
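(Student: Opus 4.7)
The plan is to reduce everything to Lemma \ref{lemma4} by verifying that its hypotheses (i)--(iii) are preserved at every outer iterate. First I would check the base case $x_0:=x_{0,0}=\beta p$. Hypothesis (ii) is exactly the assumption $\beta p>q$. For (i), the identity $Ap=\mu p$ gives $\mathcal{G}(\beta p)=\beta(\mu-\lambda)p+F(\beta p)$, whose $i$-th component is positive iff $f_i(\beta p_i)/(\beta p_i)>\lambda-\mu$, which is exactly condition \eqref{con1}. For (iii), the matrix $\mathcal{G}'_{x_0}=A+D^{(x_0)}-\lambda I$ is symmetric with the same nonpositive off-diagonal pattern and irreducibility structure as $A$, so by Lemma \ref{lemma1}(c) it suffices to exhibit a positive test vector. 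Choosing $v=p$ yields $\mathcal{G}'_{x_0}p=(f'(x_0)-(\lambda-\mu)e)\odot p$, and since $f_i$ is strictly convex with $f_i(0)=0$ (forced by $\lim_{t\to0}f_i(t)/t=0$), we get $f_i'(\beta p_i)>f_i(\beta p_i)/(\beta p_i)>\lambda-\mu$, so $\mathcal{G}'_{x_0}p>0$ and (iii) follows.

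Next I would propagate these properties along the iteration by a nested induction. Fix $k$ and set $z=x_{k,0}$. A sub-induction on the inner index $s$ shows that each $x_{k,s}$ satisfies (i)--(iii) and $q<x_{k,s}<x_{k,s-1}$. Indeed, at step $s$ we have $z=x_{k,0}\geq x_{k,s-1}$ from the monotonicity at earlier inner steps, so Lemma \ref{lemma4} applied to $x=x_{k,s-1}$ with this $z$ produces exactly the required properties for $x_{k,s}=x_{k,s-1}-(\mathcal{G}'_{x_{k,0}})^{-1}\mathcal{G}(x_{k,s-1})$. At the end of the inner loop, $x_{k+1,0}=x_{k,n_k}$ inherits (i)--(iii), the outer induction closes, and $x_{k+1}<x_k$ holds for every $k$.

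Finally I would analyze the limit. The sequence $\{x_k\}$ is componentwise decreasing and bounded below by $q>0$, hence converges to some $x^*\geq q$. The first inner step gives $x_k-x_{k,1}=(\mathcal{G}'_{x_k})^{-1}\mathcal{G}(x_k)\geq 0$, and the further inner iterates keep decreasing, so $0\leq (\mathcal{G}'_{x_k})^{-1}\mathcal{G}(x_k)\leq x_k-x_{k+1}\to 0$. The main obstacle is passing from this to $\mathcal{G}(x_k)\to 0$, which requires a uniform bound on $(\mathcal{G}'_{x_k})^{-1}$. The argument I would use is that $x_k$ lives in the compact order interval $[q,x_0]$ and $x\mapsto\mathcal{G}'_x$ is continuous; at the limit, since $x^*\geq q$ and $D^{(x^*)}\geq D^{(q)}$ by convexity of $f_i$, Lemmas \ref{choi2} and \ref{lemma2} imply that $\mathcal{G}'_{x^*}$ is a nonsingular M-matrix, hence invertible. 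Continuity of matrix inversion on a neighborhood of an invertible matrix then gives a uniform bound on $(\mathcal{G}'_{x_k})^{-1}$, so $\mathcal{G}(x_k)\to 0$. Taking the limit yields $\mathcal{G}(x^*)=0$, and the uniqueness part of Lemma \ref{choi1} forces $x^*=q$.
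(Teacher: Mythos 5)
Your proposal is correct and follows the paper's overall strategy: verify the hypotheses of Lemma \ref{lemma4} at $x_{0,0}=\beta p$, propagate them through the inner and outer loops by a nested induction, and pass to the monotone limit. Two steps are handled differently. For hypothesis (iii) at the base case you exhibit the test vector $p$ with $\mathcal{G}'_{x_{0,0}}p=\bigl(f'(\beta p)-(\lambda-\mu)e\bigr)\odot p>0$ and invoke Lemma \ref{lemma1}(c); the paper instead gets this from Lemma \ref{choi2} (that $\mathcal{G}'_{q}$ is an irreducible Stieltjes matrix) together with $x_{0,0}>q$ and Lemma \ref{lemma2}. Your version is more self-contained and makes explicit where condition \eqref{con1} and $f_i'(t)>f_i(t)/t$ enter. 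For the final limit, the paper uses the entrywise bound $(\mathcal{G}'_{x_k})^{-1}\geq(\mathcal{G}'_{x_0})^{-1}>0$ to conclude $(\mathcal{G}'_{x_0})^{-1}\mathcal{G}(x_k)\to 0$ and then multiplies by the fixed invertible matrix $\mathcal{G}'_{x_0}$; you instead establish invertibility of $\mathcal{G}'_{x^*}$ (via $\mathcal{G}'_{x^*}\geq\mathcal{G}'_{q}$ and Lemma \ref{lemma2}) and appeal to continuity of inversion. Both close the gap, although your diagnosis of the obstacle is slightly off: to pass from $(\mathcal{G}'_{x_k})^{-1}\mathcal{G}(x_k)\to 0$ to $\mathcal{G}(x_k)\to 0$ one needs boundedness of $\mathcal{G}'_{x_k}$ (immediate since $x_k\in[q,x_0]$ and $f'$ is continuous), not of its inverse; your continuity argument nevertheless delivers the conclusion, since $(\mathcal{G}'_{x_k})^{-1}\to(\mathcal{G}'_{x^*})^{-1}$ forces $(\mathcal{G}'_{x^*})^{-1}\mathcal{G}(x^*)=0$. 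You also explicitly invoke the uniqueness part of Lemma \ref{choi1} to identify $x^*=q$, a step the paper leaves implicit.
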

\begin{proof}
We prove the theorem by mathematical induction. From \eqref{con1}, we have $\mathcal{G}(x_{0,0})>0$. From $\lim_{t\rightarrow0} f_i(t)/t=0$ and $f_i^{''}(t)>0, 0<t<\infty$,
we have $f^{'}(x_{0,0})>0$, and thus $\mathcal{G}^{'}_{x_{0,0}}$ is an irreducible Stieltjes matrix.

So according to Lemma \ref{lemma4}, we have
\begin{equation*}
  q< x_1 = x_{0,n_0}<x_{0,n_0-1} \cdots < x_{0,1}< x_{0,0}
\end{equation*}
\begin{equation*}
    \mathcal{G}(x_1)> 0,
\end{equation*}
and
$\mathcal{G}^{'}_{x_1}$ is an irreducible Stieltjes matrix.

Assume
\begin{equation*}
  q< x_i=x_{i-1,n_{i-1}} \cdots <x_{i-1,1}<x_{i-1}
\end{equation*}
\begin{equation*}
    \mathcal{G}(x_i)> 0,
\end{equation*}
and
$\mathcal{G}^{'}_{x_i}$ is an irreducible Stieltjes matrix.
Again by Lemma \ref{lemma4} we have
\begin{equation*}
  q< x_{i+1} = x_{i,n_i}<x_{i,n_i-1} \cdots < x_{i,1}< x_{i}
\end{equation*}
\begin{equation*}
    \mathcal{G}(x_{i+1})> 0,
\end{equation*}
and
$\mathcal{G}^{'}_{x_{i+1}}$ is an irreducible Stieltjes matrix.

Therefore we have proved inductively the sequence $\{x_k\}$ is monotonically decreasing and bounded below. So it has a limit $x_*$. Next we show that $\mathcal{G}(x_*)=0$.
Since $q<x_k< x_0$, we have $\mathcal{G}^{'}_{x_k}\leq \mathcal{G}^{'}_{x_0}$ and we know $\mathcal{G}^{'}_{x_k}$ and $\mathcal{G}^{'}_{x_0}$  are irreducible Stieltjes matrices
by Lemma \ref{lemma2} and the fact that $\mathcal{G}^{'}_{q}$ is an irreducible Stieltjes matrix.  Then from Lemma \ref{lemma2} we have

$$(\mathcal{G}^{'}_{x_k})^{-1} \geq (\mathcal{G}^{'}_{x_0})^{-1} > 0.$$

Letting $i\rightarrow \infty$ in $x_{i+1}\leq x_{i,1}=x_i-(\mathcal{G}^{'}_{x_i})^{-1}\mathcal{G}({x_i})\leq x_i-(\mathcal{G}^{'}_{x_0})^{-1}\mathcal{G}({x_i})$,
we get $$\lim_{i\rightarrow \infty}(\mathcal{G}^{'}_{x_0})^{-1}\mathcal{G}({x_i})=0.$$
$\mathcal{G}(x)$ is continuous at $x_*$, so $(\mathcal{G}^{'}_{x_0})^{-1}\mathcal{G}({x_*})=0,$ and thus we get
$\mathcal{G}({x_*})=0.$

\end{proof}

\section{Numerical Experiments}
We remark that the modified Newton method differs from Newton's method in that the evaluation  of the  Fr\'{e}chet derivative are not done at every iteration step.
So, while more iterations will be needed than Newton's method, the overall cost of the modified Newton method may be much less.
Our numerical experiments confirm the efficiency of the modified Newton method for equation \eqref{eigeq}.
About how to choose the optimal scalars $n_i$ in the Newton-like  method(\ref{eiglikea}), we have no theoretical results for the moment.
This is a goal for our future research.

 The numerical  tests were performed on a laptop (3.0 Ghz and 16G Memory) with MATLAB R2016b. Numerical experiments show that the the modified Newton method can be more efficient
  than the Newton iteration in \cite{choi2002}.
We define the number of the evaluation of the Fr\'{e}chet derivative in the algorithm as the outer iteration steps,
  which is $i+1$ when $s>0$ or $i$ when $s=0$ for an approximate solution $x_{i,s}$ in the modified Newton algorithm.
The outer iteration steps (denoted as ``outer-iter"),  the
elapsed CPU time in seconds (denoted as ``time"), and the normalized residual
(denoted as "NRes" ) are used to measure the effectiveness of our new method, where "NRes" is
defined as
\begin{equation*}
\mbox{NRes}=\frac{\parallel A\tilde{x}+F(\tilde{x})-\lambda \tilde{x} \parallel_{\infty}}{\parallel  A\tilde{x} \parallel_{\infty} + \parallel F(\tilde{x}) \parallel_{\infty}+\lambda \parallel \tilde{x}\parallel_{\infty} },
\end{equation*}
where $\parallel\cdot\parallel_{\infty}$ is the vector infinity-norm and $\tilde{x}$ is an approximate solution to \eqref{eigeq}.

In Table~\ref{tab1}, we present the numerical results for the one-dimensional case of the Gross-Pitaevskii equation, i.e.,
\begin{eqnarray}
  &-x^{''}(t)+V(t) x(t) +kx^3(t)  = \lambda x(t), \quad -\infty<t<\infty \\
  &x(\pm \infty)=0,\quad \int^{\infty}_{-\infty}x^2(t)dt=1
\end{eqnarray}
where we set $V(t)=t^2$, $k=1$ and $\lambda=2$. We truncate this equation on the interval $[-5,5]$ and use finite differences to discretize this it. Specifically,
choose $n$, let $h=10/(n+1)$, and let $t_i= -5+ih$, $i=0,1,\cdots, n+1$.
 We use $x_0=\beta p$ as the initial iteration value of the Newton-like method, where $\beta=10$ and $p$ is the positive eigenvector of $A$ of unit infinity norm.
 Now the conditions in Theorem \ref{thm1} are satisfied.
 The stopping criterion is
 \begin{equation*}
    \parallel A\tilde{x}+F(\tilde{x})-\lambda \tilde{x} \parallel_{\infty}<\epsilon
 \end{equation*}
 where $\epsilon=10^{-10}$.
In Table~\ref{tab1}, we update the Fr\'{e}chet derivative every three iteration steps.
That is, for $i=0,1,\cdots$  we choose $n_i=3$ in the Newton-like  method (\ref{eiglikea}).


\begin{table}[h]
\label{tab1}
\begin{center}
\caption{Comparison of the numerical results}\label{tab1}
\begin{tabular}{|c|c|c|c|c|}\hline
$n$ & Method &  time & NRes & outer-iter\\
\hline  & Newton&  0.1719 &  2.16e-13& 10 \\
\cline{2-5} 500 & modified Newton & 0.1250& 3.27e-12 & 6 \\
\hline  & Newton&  1.2031 &  9.34e-13& 10 \\
\cline{2-5} 1000 & modified Newton &1.0313 & 3.98e-12 & 6 \\

\hline
\end{tabular}
\end{center}
\end{table}

\section{Conclusions}
In this paper, we consider the modified Newton method for a nonlinear eigen-problem.
The convergence analysis shows that this modified Newton method is feasible.
 Numerical experiments show that the modified Newton method is effective and can outperform Newton's method.

\section*{Conflict of Interest}
 The authors declare that they have no conflict of interest.

\end{document}